\newtheorem{theorem}{\bf Theorem}
\newtheorem{lemma}[theorem]{\bf Lemma}
\newenvironment{proof}{\noindent{\bf Proof}}
\newcommand{\CE}{\mathcal{E}}
\newcommand{\CT}{\mathcal{T}}
\newcommand{\CN}{\mathcal{N}}
\newcommand{\CV}{\mathcal{V}}
\newcommand{\bV}{\mbox{\boldmath{$V$}}}
\newcommand{\bW}{\mbox{\boldmath{$W$}}}
\newcommand{\fb}{\mbox{\boldmath{$f$}}}
\newcommand{\bu}{\mbox{\boldmath{$u$}}}
\newcommand{\bv}{\mbox{\boldmath{$v$}}}
\newcommand{\bx}{\mbox{\boldmath{$x$}}}
\newcommand{\bzero}{\mbox{$\bf 0$}}
\newcommand{\BBR}{\mbox{$\mathbb{R}$}}
\newcommand{\bN}{\mbox{$\mathbb{N}$}}
\newfont{\twelvemsb}{msbm10 at 11.6pt}
\renewcommand{\div}{\mathop{\rm div\,}}
\newcommand{\divh}{{\rm div_h}}
\begin{document}

\title{A nonconforming finite element method for 
the Stokes equations using 
the Crouzeix-Raviart element for the velocity and the 
standard linear element for the pressure}

\author{Bishnu P.~Lamichhane\thanks{School of Mathematical \& Physical Sciences,
Mathematics Building,
University of Newcastle,
University Drive,
Callaghan, NSW 2308, Australia, {\tt Bishnu.Lamichhane@newcastle.edu.au}}}

\maketitle

\begin{abstract}
We present a finite element method for Stokes equations using 
the Crouzeix-Raviart element for the velocity and the continuous linear
element for the pressure. We show that the inf-sup condition is 
satisfied for this pair. Two numerical experiments 
are presented to support the theoretical results. 
\end{abstract}

\noindent\textit{Keywords: Stokes equations, mixed finite elements, 
Crouzeix-Raviart element, nonconforming method, inf-sup condition}

\noindent\textit{AMS Subject Classification: 65N30, 65N15}
 
\section{Introduction}
The finite element method is very popular method for approximating 
the solutions of partial differential equations. 
There are many finite element methods for Stokes and Navier-Stokes equations leading 
to optimal convergence \cite{GR86,BF91,BS94}.  However, the search for 
simple and efficient finite element schemes for Stokes and
Navier-Stokes equations is still an active area of research. 
The nonconforming techniques and discontinuous Galerking methods 
 have also gained high popularity. In particular, it is easier to
 prove the inf-sup condition with the nonconforming methods , and 
they are also quite simple to implement. Moreover, nonconforming
finite element basis functions have smaller support compared to
conforming finite elements. One of the most popular nonconforming
finite element methods is the method based on discretizing the
velocity with the lowest order Crouzeix-Raviart element and the pressure with the 
piecewise constant functions \cite{CR73}. 
Recently a stabilized finite element method is presented using 
again Crouzeix-Raviart element for discretizing the velocity but 
replacing the piecewise constant pressure element with the 
continuous piecewise linear pressure \cite{LC08}. 
We want to emphasize two benefits of using the continuous linear pressure over the 
discontinuous piecewise constant pressure. With 
the continuous pressure we expect to get a 
better approximation for the pressure. Although 
we could not prove this better approximation theoretically,
numerical experiments support this argument. 
The second advantage is that it is easy to 
visualize the continuous pressure than the discontinuous pressure.

In this contribution, we show that the stabilization proposed in
\cite{LC08} is unnecessary. In fact, we show that the finite element
pair - the lowest order Crouzeix-Raviart element for the velocity and the continuous linear
 element for the pressure - yields a stable approximation scheme. 
The proof is based on using an interpolation operator satisfying
certain conditions. The proof also establishes that 
for a low order finite element method based on simplicial meshes with 
a piecewise constant pressure, the piecewise constant pressure 
can be replaced by the continuous linear pressure. 
The rest of the paper is organized as follows. In the next section we
recall the Stokes equations, and we prove our main results in Section
\ref{sec:fe}.  Two numerical experiments are presented 
in \ref{sec:num}  to support the theoretical result. 
Finally, a conclusion is drawn in Section
\ref{sec:con}.

\section{Stokes equations}\label{sec:bvp}
This section is devoted to the introduction of the boundary 
value problem of the Stokes equations.  Let $\Omega$ in $\BBR^d$, $d\in \{2,3\}$, be a 
bounded domain with polygonal or polyhedral boundary $\Gamma$. 
For a prescribed body force $\fb \in [L^2(\Omega)]^d$, the Stokes 
equations with homogeneous Dirichlet boundary condition  in $\Gamma$ reads
\begin{equation}
\begin{array}{ccc}
-\nu\Delta \bu + \nabla p &=& \fb\quad\text{in}\quad \Omega \\
\div  \bu &=& 0 \quad\text{in}\quad \Omega 
\end{array}
\end{equation}
with $\bu=\bzero$ on $\Gamma$, 
where $\bu$ is the velocity, $p$ is the pressure, and $\nu$ denotes
the viscosity of the fluid. 

Here we use standard notations $L^2(\Omega)$, $H^1(\Omega)$ and 
$H^1_0(\Omega)$ for Sobolev spaces, see \cite{BS94,Cia78} for details. 
Let $\bV :=[H^1_0(\Omega)]^d$  be the vector Sobolev space with 
 inner product $(\cdot,\cdot)_{1,\Omega}$ and norm $\|\cdot\|_{1,\Omega}$
defined in the standard way:  $(\bu,\bv)_{1,\Omega} := \sum_{i=1}^d
(u_i,v_i)_{1,\Omega}$, and the norm being induced by this inner
product. We also define another subspace $M$ of $L^2(\Omega)$ as 
\[ M = \{ q \in L^2(\Omega): \, \int_{\Omega} q \, dx =0\}.\]

The weak formulation of the Stokes equations is to find $(\bu,p) \in   
\bV\times M$ such that 
\begin{equation} \label{stokesw}
\begin{array}{ccc}
\nu \int_{\Omega} \nabla\bu: \nabla \bv\,dx  &+
\int_{\Omega} \div v \,p\,dx &= \ell(\bv),\quad \bv \in \bV,\\
\int_{\Omega} \div \bu\, q \,dx &&= 0,\quad q \in M,
\end{array}
\end{equation}
where $ \ell(\bv) = \int_{\Omega} \fb\cdot \bv\,dx.$
It is well-known that the weak formulation of the Stokes problem is
well-posed \cite{GR86}. In fact, if the domain $\Omega$ is convex, and 
$\fb \in [L^2(\Omega)]^d$, we 
have $\bu \in [H^2(\Omega)]^d$, $p \in H^1(\Omega)$ and 
the a priori estimate holds 
\[ \|\bu \|_{2,\Omega}  + \|p \|_{1,\Omega}  \leq C \|\fb \|_{0,\Omega},\]
where the constant $C$ depends on the domain $\Omega$.

\section{Finite element discretizations}\label{sec:fe}
We consider a quasi-uniform triangulation $\CT_h$ of the 
polygonal or polyhedral domain $\Omega$, where $\CT_h$
consists of simplices, either
triangles or tetrahedra, where $h$ denotes the mesh-size. 
 Note that $\CT_h$ denotes the set
of elements.  For an element $T \in \CT_h$, let $P_n(T)$ be the 
set of all polynominals of degree less than or equal to $ n \in
\bN\cup \{0\}$.  The the set of all 
vertices in $\CT_h$ is denoted by 
$\CV_h:=\{\bx_i\}_{i=1}^N$, and $\CN_h :=\{1,2,\cdots,N\}.$

Let $\CE_h$ be the set of all edges in two
dimensions and faces in three dimensions.
The continuity of a function $\bv_h$ across an edge or a face $e\in \CE_h$ for 
the nonconforming finite element will be enforced according to 
\[ J_e(\bv_h) := \int_{e}[\bv_h]_e \,d\sigma =0,\]
where $[\bv_h]_e$ is the jump of the function $\bv_h$ across the edge
$e$. 
Then the Crouzeix-Raviart
finite element space $\bV_h$ is defined as 
\[ \bW_h := \{\bv_h \in [L^2(\Omega)]^d:\, \bv_h|_{K} \in [P_1(K)]^d,
\; 
K \in \CT_h, \; J_e(\bv_h) =0,\; e \in \CE_h\}.\]
The finite element basis functions of $\bW_h$ are associated 
with the mid-points of the edges of triangles 
or the bary-centers of the faces of  tetrahedra. 
To impose the homogeneous Dirichlet boundary condition on $\Gamma$ we 
 define $\bV_h$ as a subset of $\bW_h$ where 
\[ \bV_h : = \{ \bv_h \in \bW_h:\, \int_{e} \bv_h\,d\sigma =0,
e \in \CE_h \cap \Gamma\}.\]

As $\bV_h \not\subset \bV$, we cannot use 
the standard $H^1$-norm for an element in $\bV_h$. 
So we define a broken $H^1$- norm on $\bV_h$ as 
\[ \|\bv_h\|_{1,h} : =\sqrt{ \sum_{T \in \CT_h}
  \|\bv_h\|^2_{1,T}},\quad \bv_h \in \bV_h.
\]
The broken $H^1$- semi-norm is similarly 
defined 
\[ |\bv_h|_{1,h} : =\sqrt{ \sum_{T \in \CT_h}
  |\bv_h|^2_{1,T}},\quad \bv_h \in \bV_h.
\]
We also define two relevant finite element spaces 
\[ Q_h :=\left\{ q_h \in L^2(\Omega):\, 
q_h|_{K} = P_0(K), \; K \in \CT_h,\; \int_{\Omega} q_h\,dx =0\right\},\]
and 
\[ P_h :=\left\{ q_h \in H^1(\Omega):\, 
q_h|_{K} = P_1(K), \; K \in \CT_h,\; \int_{\Omega} q_h\,dx =0\right\}.\]
Now our discrete weak formulation of Stokes equations can be written
as:  find $(\bu_h,p_h) \in \bV_h \times P_h$ such that
\begin{equation}\label{mixed}
\begin{array}{lllllllll}
a(\bu_h,\bv_h)&+&b(\bv_h,p_h)&=&\ell(\bv_h),\quad &\bv_h &\in& \bV_h , \\
b(\bu_h,q_h)&&&=&0,\quad &q_h &\in& P_h,
\end{array}
\end{equation}
where
\begin{eqnarray*}
a(\bu_h,\bv_h):=\nu \sum_{T \in \CT_h} \int_{T} \nabla \bu_h:\nabla \bv_h\,dx,\;
b(\bv_h,q_h):=\sum_{T \in \CT_h} \int_{T}\div \bv_h\,q_h \,dx.
\end{eqnarray*}
In order to show that the saddle point problem 
\eqref{mixed} has a unique solution, we want to apply a
standard saddle point theory \cite{BF91,BS94}. 
To this end, we need to show the following 
{\em three conditions of well-posedness.}
\begin{enumerate}
\item The linear form $\ell(\cdot)$,  
the bilinear forms $a(\cdot,\cdot)$ 
and $b(\cdot,\cdot)$ are continuous on the spaces on which 
they are defined. 
\item The bilinear form 
$a(\cdot,\cdot)$ is coercive on the space $K$ defined as 
\[ K =\{ \bv_h \in \bV_h:\,
b(\bv_h,q_h) =  0,\, q_h \in P_h\}.\]
\item The finite element pair $(\bV_h,P_h)$ satisfies a 
uniform inf-sup condition. That means 
the bilinear form $b(\cdot,\cdot)$ satisfies 
\begin{eqnarray}\label{infsupc} 
\sup_{\bv_h \in \bV_h} \frac{b(\bv_h,q_h)}{ 
\|\bv_h\|_{1,h} } \geq \beta \|q_h\|_{0,\Omega},\quad q_h \in P_h
\end{eqnarray}
for a constant $\beta >0$ independent of  the mesh-size $h$.
\end{enumerate}
It is standard that the linear form $\ell(\cdot)$, and 
the bilinear forms $a(\cdot,\cdot)$ and $b(\cdot,\cdot)$
are continuous, and $a(\cdot,\cdot)$  is coercive due to 
Poincar\'e inequality.  It remains to prove that 
the finite element pair $(\bV_h,P_h)$  satisfies the 
uniform inf-sup condition. 

A finite element method for the Stokes equations is presented in 
\cite{LC08} using the finite element pair $(\bV_h,P_h)$, where the 
authors use  a local stabilization to obtain the stability of the system. 
Here we show that the stabilization is not necessary, and 
the finite element pair $(\bV_h,P_h)$ also satisfies 
the uniform inf-sup condition. This is the main goal of the paper. 

To show this we use the 
fact that the finite element pair 
$(\bV_h,Q_h)$ satisfies a uniform inf-sup condition, which is proved
by Crouzeix and Raviart 
\cite{CR73}. Hence there exists a constant $\hat \beta >0$ indpendent
of the mesh-size $h$ such that 
\[ \sup_{\bv_h \in \bV_h} \frac{b(\bv_h\,q_h)}{ 
\|\bv_h\|_{1,h} } \geq \hat \beta \|q_h\|_{0,\Omega},\quad q_h \in Q_h.
\]
 We now introduce another finite element space 
\[ R_h := \{ q_h \in Q_h:\, q_h|_{S_i} =P_0(S_i),\; i \in \CN_h\},\]
where $S_i$ is the support of the standard linear basis function  
$\phi_i  \in P_h$ associated with the vertex $\bx_i\in \CV_h$.  
We use this space to show that the finite element pair $(\bV_h,P_h)$  satisfies the 
uniform inf-sup condition.  Since $R_h \subset Q_h$,
the finite element pair $(\bV_h,R_h)$ also satisfies the 
uniform inf-sup condition. Thus we can get a consant 
$\tilde \beta >0$ independent of the mesh-size $h$ such that 
\[ \sup_{\bv_h \in \bV_h} \frac{b(\bv_h,q_h)}{ 
\|\bv_h\|_{1,h} } \geq \tilde\beta \|q_h\|_{0,\Omega},\quad q_h \in R_h.
\]

In order to show that the finite element pair $(\bV_h,P_h)$ 
satisfies the uniform inf-sup condition we introduce an interpolation 
operator $I_h$ \cite{RTY02,Lam08a,Lam09a} 
\[ I_h : P_h \rightarrow R_h\] 
defined as  \[I_hq_h =  \sum_{i=1}^N q_i\chi_i\] 
for  $q_h= \sum_{i=1}^N q_i \phi_i \in P_h$, where 
$\chi_i$ is a characteristic function of the set $S_i$, and $\phi_i$
is the standard linear hat function associated with  the vertex $\bx_i$ for $ i \in
\CN_h$.  We note that the interpolation operator $I_h$ is
well-defined, and it is not the standard Fortin interpolation operator.
We define an element-wise defined divergence $\divh \bv_h $
of a vector function 
$\bv_h \in \bV_h$ 
\[ \divh \bv_h|_{T} = \div \bv_h|_{T},\quad T \in \CT_h.\]
We have now following two lemmas. In the following, we use generic constants $C$, $C_1$ and $C_2$. They
may  take different values at different places but they 
do not depend on the mesh-size $h$.
\begin{lemma} \label{lemma1}
Given $\bv_h \in \bV_h$ and $q_h \in P_h$ 
we have \[
(d+1) \int_{\Omega} \divh\bv_h\,q_h\,dx = \int_{\Omega} \divh\bv_h\,I_hq_h\,
dx. \]
\end{lemma}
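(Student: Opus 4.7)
The plan is to prove the identity element by element, exploiting the fact that $\divh\bv_h$ is piecewise constant on $\CT_h$. I would first fix a simplex $T\in\CT_h$ with vertices $\bx_{i_0},\dots,\bx_{i_d}$ and pull the constant $(\divh\bv_h)|_T$ out of both integrals, reducing the claim to a purely geometric identity relating $\int_T q_h\,dx$ and $\int_T I_hq_h\,dx$.

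The first key ingredient is the standard barycentric-integral formula
\[\int_T \phi_{i_k}\,dx = \frac{|T|}{d+1},\qquad k=0,\dots,d,\]
together with the observation that $\phi_j|_T\equiv 0$ whenever $\bx_j$ is not a vertex of $T$. Writing $q_h=\sum_{i=1}^N q_i\phi_i$, this gives
\[\int_T q_h\,dx = \frac{|T|}{d+1}\sum_{k=0}^{d} q_{i_k}.\]
The second ingredient concerns the characteristic functions $\chi_i$ of $S_i=\supp\phi_i$: for each vertex $\bx_{i_k}$ of $T$ one has $T\subset S_{i_k}$, so $\chi_{i_k}|_T\equiv 1$, while $\chi_j|_T\equiv 0$ for all other indices $j$. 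Therefore
\[\int_T I_h q_h\,dx = \sum_{k=0}^{d} q_{i_k}\,|T|.\]

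Comparing the two displays, one sees immediately that $\int_T I_hq_h\,dx=(d+1)\int_T q_h\,dx$. Multiplying by $(\divh\bv_h)|_T$, summing over $T\in\CT_h$, and using the definition of $\divh$ yields the claim. There is no real obstacle here; the only thing to be careful about is to include, for each $T$, exactly the $d+1$ hat functions associated with vertices of $T$ (and no others), which is precisely what produces the factor $d+1$ on the left-hand side.
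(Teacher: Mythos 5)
Your proof is correct and rests on exactly the same ingredients as the paper's: the formula $\int_T \phi_i\,dx = |T|/(d+1)$, the fact that $\divh\bv_h$ is constant on each simplex, and that $\chi_i$ equals $1$ on every element contained in $S_i$. The only difference is bookkeeping — you organize the computation element by element, while the paper sums vertex by vertex over the patches $S_i$ — so the two arguments coincide after swapping the order of summation.
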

\begin{proof}
Let $q_h = \sum_{i=1}^N q_i \phi_i$, and hence 
$I_hq_h = \sum_{i=1}^N q_i \chi_i$. Then 
using the fact that $\divh \bv_h$ is piecewise constant with 
respect to the mesh $\CT_h$  we have 
\[  \int_{\Omega} \divh\bv_h\,I_hq_h\,dx = 
\sum_{i=1}^N q_i
\int_{S_i} \divh\bv_h\,dx = \sum_{i=1}^N q_i
\sum_{T \subset S_i}|T| (\div\bv_h)|_{T}.\]
Similarly, 
\[ 
\int_{\Omega} \divh\bv_h\,q_h\,
dx = \sum_{i=1}^N q_i \int_{S_i} \divh\bv_h\,\phi_i\,dx
= \frac{1}{d+1} \sum_{i=1}^N q_i \sum_{T \subset S_i}|T| (\div\bv_h)|_{T}.
\]

\end{proof}

\begin{lemma}\label{lemma2}
There exist positive constants $C_1$ and $C_2$ with 
\begin{equation} \label{eq2}
C_1 \|q_h\|_{0,\Omega} \leq \|I_h q_h\|_{0,\Omega} \leq C_2 \|q_h\|_{0,\Omega}.
 \end{equation} 
\end{lemma}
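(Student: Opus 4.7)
The plan is to prove both inequalities in \eqref{eq2} by an element-by-element analysis, transferring each element to a reference simplex and using quasi-uniformity of $\CT_h$ to keep all constants independent of~$h$.

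For the upper bound I would first observe that $I_h q_h$ is piecewise constant on $\CT_h$: on any $T\in\CT_h$, the characteristic function $\chi_i$ equals $1$ throughout $T$ precisely when $\bx_i$ is a vertex of $T$, so
\[
I_h q_h|_T=\sigma_T:=\sum_{\bx_i\in T} q_i,\qquad \|I_h q_h\|_{0,T}^{2}=|T|\,\sigma_T^{2}.
\]
Since $q_h|_T$ is affine with vertex values $q_i$, its integral over $T$ equals $|T|\sigma_T/(d+1)$, and Cauchy--Schwarz yields
\[
\frac{|T|^{2}\sigma_T^{2}}{(d+1)^{2}}=\Bigl(\int_T q_h\,dx\Bigr)^{2}\le |T|\,\|q_h\|_{0,T}^{2},
\]
so that $\|I_hq_h\|_{0,T}\le(d+1)\|q_h\|_{0,T}$; summing over $T\in\CT_h$ produces the right inequality with $C_2=d+1$.

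For the lower bound I would use a scaling argument. After the standard affine map to the reference simplex, the quantity $\|q_h\|_{0,T}^{2}$ becomes $|T|$ times a positive-definite quadratic form in the vertex-value vector $(q_i)_{\bx_i\in T}$, whereas $\|I_hq_h\|_{0,T}^{2}=|T|\sigma_T^{2}$. Combined with $|T|\sim h^d$ from quasi-uniformity, this reduces the estimate to a comparison of two discrete quadratic forms in the global coefficient vector $(q_i)_{i\in\CN_h}$; the comparison is established once on a generic reference macro-patch and then transported uniformly to every patch by scaling.

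The main obstacle is precisely this lower bound: because $\sigma_T$ can shrink by cancellation of the $q_i$, no purely element-by-element estimate is available. I would address this by a patch argument that uses the continuity of $q_h\in P_h$ across interelement boundaries to link neighbouring vertex values and, if needed, the zero-mean condition $\int_\Omega q_h\,dx=0$ built into $P_h$, in order to control the global coefficient vector. Quasi-uniformity then delivers a constant $C_1>0$ independent of~$h$, completing the proof.
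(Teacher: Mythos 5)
Your upper bound is complete, correct, and in fact more informative than the paper's own argument: on each $T\in\CT_h$ one indeed has $I_hq_h|_T=\sum_{\sbx_i\in T}q_i$, the centroid identity $\int_T q_h\,dx=\frac{|T|}{d+1}\sum_{\sbx_i\in T}q_i$ holds for affine $q_h$, and Cauchy--Schwarz then gives $\|I_hq_h\|_{0,\Omega}\le (d+1)\|q_h\|_{0,\Omega}$ with the explicit constant $C_2=d+1$ and without any use of quasi-uniformity. The paper instead disposes of the whole lemma in one line by asserting that $\|I_hq_h\|_{0,\Omega}^2$, $\|q_h\|_{0,\Omega}^2$ and $\sum_{i=1}^N q_i^2h_i^2$ are all equivalent; the direction of that assertion corresponding to your upper bound is harmless, so up to this point you have actually done more than the paper.

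The lower bound, however, is where your proposal stops being a proof: the ``patch argument using continuity and, if needed, the zero-mean condition'' is only announced, and no mechanism is given by which the patch data $\sigma_T$ would control the individual vertex values. This gap cannot be closed, because the left inequality in \eqref{eq2} is false with an $h$-independent (indeed with any positive) constant on perfectly ordinary quasi-uniform meshes. Take $\Omega=(0,1)^2$, triangulate it uniformly by cutting each square of an $n\times n$ grid along the same diagonal, and define $q_h$ through the vertex values $q_{ij}=g(i+j)$, where $g$ has period $3$ and $(g(0),g(1),g(2))=(1,1,-2)$. Every triangle of this mesh has vertices whose index sums are $s$, $s+1$, $s+2$ for some $s$, so $\sum_{\sbx_i\in T}q_i=0$ on every $T$; hence $I_hq_h\equiv 0$, while $\int_\Omega q_h\,dx=\sum_{T\in\CT_h}\frac{|T|}{3}\sum_{\sbx_i\in T}q_i=0$ (so $q_h\in P_h$) and $\|q_h\|_{0,\Omega}$ is bounded away from zero uniformly in $h$. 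Thus neither interelement continuity nor the zero-mean constraint can rescue the estimate: the cancellation you correctly identified as the obstacle is not just an inconvenience for element-by-element reasoning, it destroys the inequality itself (and, via Lemma \ref{lemma1}, this same $q_h$ satisfies $b(\bv_h,q_h)=0$ for all $\bv_h\in\bV_h$, since $\div\bv_h$ is elementwise constant and $\int_Tq_h\,dx=0$). The same example shows that the nontrivial direction of the norm equivalence invoked in the paper's proof, namely $\sum_{i=1}^N q_i^2h_i^2\le C\|I_hq_h\|_{0,\Omega}^2$, fails as well; so the step you postponed is precisely the step that is unproved in the paper and cannot be proved in the stated generality.
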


\begin{proof}
The proof follows by using the  fact that $\|I_hq_h\|^2_0$ and $\|q_h\|^2_0$ and 
$\sum_{i=1}^Nq^2_i h^2_i$ are equivalent, where $h_i$ is the local 
mesh-size at the $i$-th node of $\CT_h$.
\end{proof}
We are now in a position to prove the main result of the paper. 
\begin{theorem}
The finite element pair 
 $(\bV_h, P_h)$ satisfies the inf-sup condition. That means there
 exists a consant $ \beta>0$ independent of mesh-size $h$ such that 

\[ \sup_{\bv_h \in \bV_h} \frac{b(\bv_h,q_h)}{ 
\|\bv_h\|_{1,h} } \geq \beta \|q_h\|_{0,\Omega},\quad q_h \in P_h.
\]
\end{theorem}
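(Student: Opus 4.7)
The plan is to derive the desired inf-sup condition for $(\bV_h,P_h)$ directly from the known inf-sup condition for $(\bV_h,R_h)$, using Lemmas \ref{lemma1} and \ref{lemma2} as a bridge between a pressure $q_h\in P_h$ and its image $I_h q_h\in R_h$.

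The first thing I would check is that the operator $I_h$ actually maps $P_h$ into $R_h$, i.e.\ that $I_h q_h$ has zero mean. Writing $q_h=\sum_{i=1}^N q_i\phi_i$, one has $\int_\Omega\phi_i\,dx=|S_i|/(d+1)$ (barycentric-coordinate identity on each simplex of $S_i$), so $\int_\Omega q_h\,dx=0$ forces $\sum_i q_i|S_i|=0$, and since $\int_\Omega I_hq_h\,dx=\sum_i q_i|S_i|$, membership $I_hq_h\in R_h$ is immediate. By construction $I_h q_h$ is piecewise constant on $\CT_h$, so it lies in $Q_h$, and the condition $q_h|_{S_i}\in P_0(S_i)$ is satisfied because $\chi_i$ is constant on each $S_i$.

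Now fix $q_h\in P_h$ and apply the uniform inf-sup condition for the pair $(\bV_h,R_h)$ (available since $R_h\subset Q_h$ and the Crouzeix--Raviart pair $(\bV_h,Q_h)$ is stable) to $I_h q_h\in R_h$. This yields some $\bv_h\in\bV_h$ (or a near-supremizing sequence) with
\[
b(\bv_h,I_h q_h)\;\geq\;\tilde\beta\,\|I_h q_h\|_{0,\Omega}\,\|\bv_h\|_{1,h}.
\]
By Lemma \ref{lemma1} applied to this particular $\bv_h$ and $q_h$,
\[
b(\bv_h,I_h q_h)\;=\;\int_\Omega \divh\bv_h\,I_h q_h\,dx\;=\;(d+1)\int_\Omega \divh\bv_h\,q_h\,dx\;=\;(d+1)\,b(\bv_h,q_h),
\]
and by the lower bound in Lemma \ref{lemma2}, $\|I_h q_h\|_{0,\Omega}\geq C_1\|q_h\|_{0,\Omega}$. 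Combining these gives
\[
\sup_{\bv_h\in\bV_h}\frac{b(\bv_h,q_h)}{\|\bv_h\|_{1,h}}\;\geq\;\frac{b(\bv_h,q_h)}{\|\bv_h\|_{1,h}}\;=\;\frac{1}{d+1}\frac{b(\bv_h,I_h q_h)}{\|\bv_h\|_{1,h}}\;\geq\;\frac{\tilde\beta\,C_1}{d+1}\,\|q_h\|_{0,\Omega},
\]
so the inf-sup condition holds with $\beta=\tilde\beta C_1/(d+1)$, independent of $h$.

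There is no real obstacle here: Lemmas \ref{lemma1} and \ref{lemma2} have already done all the work, so the remaining proof is essentially bookkeeping. The only minor subtlety worth stating explicitly is the verification that $I_h$ indeed lands in $R_h$ (zero-mean preservation), which relies on the identity $\int_\Omega\phi_i\,dx=|S_i|/(d+1)$ used in Lemma \ref{lemma1}; everything else is a direct chain of the two inequalities above.
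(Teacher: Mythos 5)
Your proposal is correct and takes essentially the same route as the paper: apply the known inf-sup condition for the piecewise-constant pressure space to $I_h q_h$, use Lemma \ref{lemma1} to replace $I_h q_h$ by $(d+1)\,q_h$ in the pairing, and use the lower bound of Lemma \ref{lemma2} to pass from $\|I_h q_h\|_{0,\Omega}$ to $\|q_h\|_{0,\Omega}$, exactly as in the paper's proof. Your added check that $I_h q_h$ has zero mean is a detail the paper omits and is welcome; the only quibble is that $I_h q_h = \sum_i q_i\chi_i$ is generally \emph{not} constant on each patch $S_i$ (the $\chi_j$ of neighbouring vertices overlap $S_i$ only partially), so the clean fix is to invoke the inf-sup condition for $(\bV_h,Q_h)$ directly, since $I_h q_h$ is piecewise constant with zero mean --- a point on which the paper's own appeal to $R_h$ is no more careful than yours.
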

\begin{proof}
The proof is straightforward. Let $q_h \in P_h$, then $I_hq_h \in
R_h$. 
Since the pair $(\bV_h,  R_h)$ satisfies the 
inf-sup condition, we can find an element $\bv_h \in \bV_h$ satisfying 
\[\int_{\Omega} \divh\bv_h\,I_hq_h\,dx \geq C \|I_hq_h\|^2_{0,\Omega},\quad 
\|\bv_h\|_{1,h} \leq C \|I_hq_h\|_{0,\Omega}.
\]
Hence the results of Lemma \ref{lemma1} and \ref{lemma2} yield 
\[ \int_{\Omega} \divh\bv_h\,I_hq_h\,dx 
= (d+1) \int_{\Omega} \divh\bv_h\,q_h\,dx \geq 
C \|I_hq_h\|^2_{0,\Omega} \geq 
C \|q_h\|^2_{0,\Omega},\]
and $\|\bv_h\|_{1,h} \leq C \|q_h\|_{0,\Omega}.$
Hence we have a constant $C$ independent of the mesh-size $h$ such
that 
\[ \sup_{\bv_h \in \bV_h} \frac{b(\bv_h,q_h)}{ 
\|\bv_h\|_{1,h} } \geq C \|q_h\|_{0,\Omega},\quad q_h \in P_h.
\]
\end{proof}

\section{Numerical experiments}\label{sec:num}
In this section we present two numerical experiments for the proposed 
finite element scheme. For the numerical experiment we consider a 
simple unit square $\Omega = (0,1)^2$, and  a uniform 
initial triangulation consisting of eight triangles. 
\paragraph{First example.}
For the first example we consider an exact solution 
given in \cite{BDG06}, where the exact solution for the velocity 
$\bu = (u_1,u_2)$ is given by 
\[ u_1 = x+{x}^{2}-2\,xy+{x}^{3}-3\,x{y}^{2}+{x}^{2}y,\quad 
u_2 = -y-2\,xy+{y}^{2}-3\,{x}^{2}y+{y}^{3}-x{y}^{2},\]
and the exact solution for the pressure is given by 
\[ p = xy+x+y+{x}^{3}{y}^{2}-\frac{4}{3}.\]
Using the kinematic viscosity $\nu =1$ and the exact solution 
we compute the right-hand side function $\fb$ and 
the Dirichlet boundary condition for the velocity.
Here we compute the errors in 
the velocity and the pressure approximation using the broken $H^1$- semi-norm and 
the $L^2$- norm, respectively. The numerical results are tabulated in 
Table \ref{exa1}. From the presented table we can see 
the optimal convergence of the 
velocity approximation in the broken $H^1$- semi-norm, and 
a super-convergence result for the pressure in the 
$L^2$-norm. As we expect a convergence rate of order 
$1$  for the pressure approximation in the $L^2$-norm but 
get a better approximation of order $1.5$, 
this is a super-convergence. This 
better convergence is due to the fact that 
 we have used the standard continuous linear finite element space 
 for the pressure approximation.

\begin{table}[!htb]
\caption{Discretization errors for the velocity and pressure, Example 1}
\begin{center}
\begin{tabular}{|c|c|c|c|c|c|c|c|c|}\hline
level $l$ & \# elem. & \multicolumn{2}{c|}{$|u -u_h|_{1,h}$}
 & \multicolumn{2}{c|}{$\|p -p_h\|_{0,\Omega}$} \\ \hline
 0 &     8 & 2.53070e-01 &  & 9.75595e-02 &  \\\hline
 1 &    64 & 1.32989e-01 & 0.93 & 3.35584e-02 & 1.54  \\\hline
 2 &   512 & 6.78573e-02 & 0.97 & 1.14072e-02 & 1.56  \\\hline
 3 &  4096 & 3.42262e-02 & 0.99 & 3.87893e-03 & 1.56  \\\hline
 4 & 32768 & 1.71804e-02 & 0.99 & 1.32844e-03 & 1.55  \\\hline
 5 & 262144 & 8.60590e-03 & 1.00 & 4.58976e-04 & 1.53  \\\hline
\end{tabular}
\end{center}
\label{exa1}
\end{table}
\paragraph{Second example.}
For our second example we consider the same computational domain but a
different exact solution with kinematic viscosity $\nu =5$. 
Here the exact solution for the velocity 
$\bu = (u_1,u_2)$ is given by 
\[ u_1 ={{\rm e}^{-x+y}}\sin \left( 5\,x \right) ,\quad 
u_2 = {{\rm e}^{-x+y}}\sin \left( 5\,x \right) -5\,{{\rm e}^{-x+y}}\cos
 \left( 5\,x \right) ,\]
and the exact solution for the pressure is given by 
\[ p = xy \left( 1-x \right)  \left( 1-y \right) -\frac{1}{36}.\]
As in the first example, the source term $\fb$ and 
the Dirichlet boundary condition are computed by 
using the chosen exact solution.  We have shown 
the errors in the velocity and pressure approximating 
using norms as in the previous example in 
Table \ref{exa2}. 
We can see that the velocity approximation converges 
linearly to the exact solution, whereas 
the pressure approximation shows a 
super-convergence as in the first example.

\begin{table}[!htb]
\caption{Discretization errors for the velocity and pressure, Example 2}
\begin{center}
\begin{tabular}{|c|c|c|c|c|c|c|c|c|}\hline
level $l$ & \# elem. & \multicolumn{2}{c|}{$|u -u_h|_{1,h}$}
 & \multicolumn{2}{c|}{$\|p -p_h\|_{0,\Omega}$} \\ \hline
 0 &     8 & 4.85293e+00 & & 3.78781e+00 &  \\\hline
 1 &    64 & 2.57400e+00 & 0.91 & 1.36397e+00 & 1.47  \\\hline
 2 &   512 & 1.31776e+00 & 0.97 & 4.77524e-01 & 1.51  \\\hline
 3 &  4096 & 6.65496e-01 & 0.99 & 1.66988e-01 & 1.52  \\\hline
 4 & 32768 & 3.34230e-01 & 0.99 & 5.85231e-02 & 1.51  \\\hline
 5 & 262144 & 1.67459e-01 & 1.00 & 2.05565e-02 & 1.51  \\\hline
\end{tabular}
\end{center}
\label{exa2}
\end{table}
\section{Conclusion}\label{sec:con}
We have presented a finite element method for the Stokes equations
using lowest order Crouzeix-Raviart element for the velocity and 
continuous linear element for the pressure. 
We have proved the fact the stabilization is not necessary as 
proposed in \cite{LC08}. The numerical experiments 
support the theoretical results. 

{\normalsize 
\bibliographystyle{plain}
\bibliography{total}
}
\end{document}